\newtheorem{theorem}{Theorem}
\newtheorem{proposition}{Proposition}
\newtheorem{corollary}{Corollary}
\newtheorem{definition}{Definition}
\newtheorem{example}{Example}
\DeclareMathOperator{\diag}{diag}
\DeclareMathOperator{\proj}{Proj}
\title{\LARGE \bf
A Passivity Analysis for Nonlinear Consensus on Digraphs
}
\author{Feng-Yu Yue and Daniel Zelazo
\thanks{This work was supported by the Israel Science Foundation grant no.\,453/24 and the Gordon Center for Systems Engineering. Feng-Yu Yue and Daniel Zelazo are with the Faculty of Aerospace Engineering, Technion
– Israel Institute of Technology, Haifa 3200003, Israel. Emails: \textsl{fengyu.yue@campus.technion.ac.il}, \textsl{dzelazo@technion.ac.il.}%
}
}
\begin{document}

\maketitle
\thispagestyle{empty}
\pagestyle{empty}

\begin{abstract}
This work presents a passivity-based analysis for the nonlinear output agreement problem in network systems over directed graphs. We reformulate the problem as a convergence analysis on the agreement submanifold. 
First, we establish how passivity properties of individual agents and controllers determine the passivity of their associated system relations.
Building on this, we introduce the concept of submanifold-constrained passivity and develop a novel compensation theorem that ensures output convergence to the agreement submanifold. Unlike previous approaches, our approach can analyze the network system with arbitrary digraphs and any passive agents. We apply this framework to analyze the output agreement problem for network systems consisting of nonlinear and passive agents. Numerical examples support our results.
\end{abstract}

\section{INTRODUCTION}
Multi-agent networks (MANs) have received significant attention in recent years due to their intriguing research challenges and extensive practical applications \cite{sandberg2022secure,amirkhani2022consensus,ChungTRO2018AerialSwarm}. MANs represent the large-scale interconnection of agents, where the analytical complexity arises from both the (nonlinear) dynamics of individual agents and the intricate couplings imposed by the network topology. This work addresses the output agreement problem, one of the fundamental challenges in this fields, where agent outputs are expected to agree on a shared value or trajectory \cite{Mesbahi2010}.

In output agreement analysis, it is essential to examine the individual agent dynamics and the network topology governing their interactions separately to mitigate the inherent complexity of MANs. Here, passivity theory plays a significant role. The seminal work by \cite{arcak2007passivitydesign} introduced passivity as a key tool for characterizing group coordination behavior. Then, in \cite{Burger2013duality, Sharf2019MIMO}, a comprehensive passivity-based cooperative control framework was developed. The framework established a connection between consensus behaviors and dual network optimization problems \cite{Rockafellar1998NetOpt}. Montenbruck et al. \cite{Max2017submanifold} regarded the agreement space as a submanifold and developed powerful analytical tools to establish connections between passivity properties of relations and submanifold stabilization problems (including consensus problems).

However, the research mentioned shares a common implicit assumption: that the underlying graphs of MANs are \emph{undirected}. This can sometimes be restrictive and may lead to inefficiencies, so it is necessary to consider network systems that interact over directed graphs (digraphs). In general, analyzing these systems presents challenges, even in the linear case \cite{OlfatiSaber2007}. On the other hand, although passivity theory plays a significant role in simplifying the analysis of MANs, its application to network systems with digraphs remains underexplored. While the work \cite{li2019passivition} introduced a passivation approach tailored for the consensus problem of directed networks, their scope was confined to scenarios involving linear static controllers.


In \cite{yue2024balance_passivity}, we proposed a framework enabling passivity-based analysis for directed networks and reformulated the output consensus problem as convergence analysis on a submanifold. Our approach includes two steps: First, we defined input-output relations for the forward and feedback paths with respect to the agreement submanifold, and derived passivity-based inequalities for these relations.  Second, we established a compensation theorem that leveraged excess passivity in the feedback path to compensate for the shortage in the forward path, thereby guaranteeing output convergence to the submanifold.

Despite these contributions, the analysis in \cite{yue2024balance_passivity} was restricted to network systems with balanced digraphs and output-strictly passive agents and controllers, addressing only the stabilization problem. Additionally, we will discuss in subsection \ref{ssec:limitations_ecc} that the compensation theorem in \cite{yue2024balance_passivity} becomes inapplicable when agents exhibit only passivity or input-strict passivity. In these scenarios, the forward path relation exhibits a passivity shortage that cannot be compensated by the feedback path's excess passivity. These unresolved limitations constitute the primary motivation for the present work.

This paper focuses on the reformulated output agreement problem for network systems over digraphs. We extend the two-step approach from \cite{yue2024balance_passivity} to arbitrary directed graphs with passive agents. Our contributions are as follows. First, we establish how passivity properties of individual agent and controller determine the passivity of their associated system relations. This reveals fundamental limitations of the existing compensation theorem in \cite{yue2024balance_passivity}. To overcome these challenges, we introduce the notion of submanifold-constrained passivity and propose a novel compensation theorem that guarantees output convergence to the agreement submanifold. This new approach eliminates the need to construct a storage function for each individual agent, requiring only a constrained storage function for the entire forward path. Finally, we apply our framework to integrator-like nonlinear agents and derive sufficient conditions for directed networks to achieve output agreement.

The remainder of this paper is organized as follows. Section \ref{sec:preliminary} presents the necessary background on network systems over digraphs, passivity, and the reformulated output agreement problem. Section \ref{sec:submanifold_passivity} introduces submanifold-constrained passivity and develops a generalized compensation theorem for output agreement. Section \ref{sec:passive_agreement} applies this theorem to analyze output agreement in a specific class of network systems. Numerical simulations and conclusions are presented in Sections~\ref{sec:case_study} and~\ref{sec:concluding}, respectively. Due to space constraints, this paper presents only selected proofs. The complete proofs can be found in \cite{}.

\paragraph*{Notations}
We denote the $n$-dimensional all-one vector and identity matrix by $\mathbb{1}_n$ and $I_n$, respectively. For a set $A$, $\vert A\vert$ denotes its cardinality. The orthogonal complement of a subspace $U$ is denoted by $U^\perp$, and $\proj_U(x)$ represents the orthogonal projection of $x \in \mathbb{R}^n$ onto $U$.

A digraph $\mathcal{D}=(\mathcal{V}, \mathcal{E})$ consists of a finite vertex set $\mathcal{V}$ and edge set $\mathcal{E}\subset\mathcal{V}\times \mathcal{V}$, with undirected counterpart $\mathbb{G}$. A digraph contains a globally reachable node if one node can be reached from all others via directed paths \cite{bullo2020lectures}. The incidence matrix $E\in \mathbb{R}^{|\mathcal{V}|\times|\mathcal{E}|}$ is defined as $[E]_{ik}=1$ if $i$ is the head of edge $e_k$, $[E]_{ik}=-1$ if $i$ is the tail, and $[E]_{ik}=0$ otherwise. We decompose $E=B_o+B_i$ into out-incidence matrix $B_o$ (where $[B_o]_{ik}:=1$ if $i$ is the head of edge $e_k=(i,k)$ and $[B_o]_{ik}:=0$ otherwise) and in-incidence matrix $B_i$ (where $[B_i]_{ik}:=-1$ if $i$ is the tail of edge $e_k$ and $[B_o]_{ik}:=0$ otherwise) \cite{restrepo2021edgeLyapunov}. The graph Laplacian, in-Laplacian and out-Laplacian matrices are defined as $L(\mathbb{G})=EE^\top$, $L_i(\mathcal{D})=B_iE^\top$, and $L_o(\mathcal{D})=B_oE^\top$, respectively.

\section{PRELIMINARIES}\label{sec:preliminary}
This section presents the necessary preliminaries on networked systems over directed graphs and passivity theory, and then establishes the connection between output agreement and convergence to the agreement submanifold.
\subsection{Network systems over digraphs}
Consider a population of agents interconnected over a directed graph $\mathcal{D}=(\mathcal{V},\mathcal{E})$, where $\mathcal{V}$ represents the set of agents and $\mathcal{E}$ indicates existence of dynamical coupling between agents (the edge controllers). 
Each agent $\{\Sigma_i\}_{i\in\mathcal{V}}$ and controller $\{\Pi_k\}_{k\in\mathcal{E}}$ are described by SISO nonlinear dynamical systems,
\begin{align}
&\hspace{-0.1em}\Sigma_i:\dot{x}_i(t)=f_i(x_i(t),u_i(t)), \
        y_i(t)=h_i(x_i(t),u_i(t)), \label{eq:agents_decomp}\\
        &\hspace{-0.1em}\Pi_k:\hspace{-0.1em}\dot{\eta}_k(t)=\phi_k(\eta_k(t),\hspace{-0.1em}\zeta_k(t)),
        \hspace{-0.1em}\mu_k(t)=\psi_k(\eta_k(t),\hspace{-0.1em}\zeta_k(t)).\label{eq:controllers_decomp}
\end{align}
In network systems over digraphs, agents and controllers are interconnected in a parallel configuration. With a slight abuse of notation, we denote the parallel interconnection of agents as $\Sigma=\diag(\Sigma_1, \ldots,\Sigma_{|\mathcal{V}|})$ and that of controllers as $\Pi=\diag(\Pi_1, \ldots,\Pi_{|\mathcal{E}|})$. Accordingly, we define $u(t)=[u_1, \ldots, u_{|\mathcal{V}|}]^\top$ as the stacked input vector of all agents, with analogous definitions for the agent outputs $y(t)$, controller inputs $\zeta(t)$, and controller outputs $\mu(t)$. 

Fig. \ref{fig:dnetworks} illustrates two block diagrams for network systems over digraphs. These structures are equivalent. However, as shown in \cite{yue2024balance_passivity}, the structure $(\Sigma,\Pi,\mathcal{D})$ in Fig. \ref{fig:dnet}, is not ideal for passivity-based analysis due to the potential loss of passivity in the feedback path, even with passive controllers. To address this, we introduced the structure $(\Sigma,\Pi,\mathcal{D},w)$ depicted in Fig. \ref{fig:decomp}. In this structure, the symmetric interconnection operator $E\Pi E^\top$ in the feedback path remains passive \cite{arcak2007passivitydesign}, provided the controllers, $\Pi$, are passive. While the passivity of the overall system cannot be guaranteed, we can treat $w=B_i\mu$ as an external output and perform passivity-based analysis on the feedback interconnection with the input-output pair $(w,y)$.

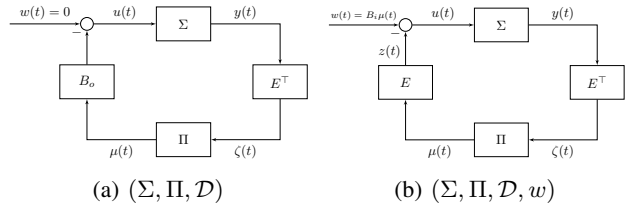
\begin{figure}[ht]
\centering
\tikzstyle{block} = [draw, rectangle, 
    minimum height=2.5em, minimum width=4em]
\tikzstyle{sum} = [draw, circle, node distance=1cm]
\tikzstyle{input} = [coordinate]
\tikzstyle{output} = [coordinate]
\tikzstyle{pinstyle} = [pin edge={to-,thin,black}]
\begin{subfigure}{0.23\textwidth}
    \resizebox{\textwidth}{!}{
\begin{tikzpicture}[scale=0.8,auto, node distance=2.5cm,>=latex']
   
    \node [input, name=input] {};
    \node [sum, right of=input, node distance=2cm] (sum) {};
    \node [block, below of=sum,
            node distance=1.5cm] (Eout) {
                $B_o$};
    \node [block, right of=sum] (agents) {$\Sigma$};
    \node [input, name=center, below of=agents] {};
    \node [block, below of=agents,
            node distance=3cm] (controllers) {$\Pi$};
    \node [output, right of=agents] (output) {};
    \node [block, below of=output,
            node distance=1.5cm] (ETout) {$E^\top$};

    \draw [draw,->] (input) -- node {$w(t)=0$} (sum);
    \draw [->] (sum) -- node {$u(t)$} (agents);
    \draw [->] (agents) -| node [near start] {$y(t)$}  (ETout);
    \draw [->] (ETout) |- node [near end] {$\zeta(t)$}  (controllers);
    \draw [->] (controllers) -| node [near start] {$\mu(t)$}  (Eout);
    \draw [->] (Eout) --  node[pos=0.95] {$-$}  (sum);
\end{tikzpicture} 
}   
\caption{$(\Sigma,\Pi,\mathcal{D})$}
    \label{fig:dnet}
\end{subfigure}
\begin{subfigure}{0.23\textwidth}
    \resizebox{\textwidth}{!}{
\begin{tikzpicture}[scale=0.8, auto, node distance=2.5cm,>=latex']
    \node [input, name=input] {};
    \node [sum, right of=input, node distance=2cm] (sum) {};
    \node [block, below of=sum,
            node distance=1.5cm] (Eout) {
                $E$};
    \node [block, right of=sum] (agents) {$\Sigma$};
    \node [input, name=center, below of=agents] {};
    \node [block, below of=agents,
            node distance=3cm] (controllers) {$\Pi$};
    \node [output, right of=agents] (output) {};
    \node [block, below of=output,
            node distance=1.5cm] (ETout) {$E^\top$};

    \draw [draw,->] (input) -- node {\scriptsize{$w(t)=B_i\mu(t)$}} (sum);
    \draw [->] (sum) -- node {$u(t)$} (agents);
    \draw [->] (agents) -| node [near start] {$y(t)$}  (ETout);
    \draw [->] (ETout) |- node [near end] {$\zeta(t)$}  (controllers);
    \draw [->] (controllers) -| node [near start] {$\mu(t)$}  (Eout);
    \draw [->] (Eout) -- node[pos=0.4]{$z(t)$} node[pos=0.95] {$-$}  (sum);
\end{tikzpicture} 
}   
\centering
\caption{$(\Sigma,\Pi,\mathcal{D},w)$}
    \label{fig:decomp}
\end{subfigure}
\caption{Two structures of network systems over directed graphs.}
\label{fig:dnetworks}
\end{figure}
 
In this context, the feedback equations can be described from an input-output perspective by defining $w(t)=B_i\mu(t)$ and $z(t)=E\mu(t)$, which leads to
\begin{equation}\label{eq:net_relations}
\begin{array}{cc}
   \hspace{-.2cm} u(t)=w(t)-z(t)=-B_o\mu(t), \text{ and } \zeta(t)=E^\top y(t).
\end{array}  
\end{equation}
In this work, we focus solely on the structure $(\Sigma,\Pi,\mathcal{D},w)$.

\subsection{Passivity}
We take the system (\ref{eq:agents_decomp}) as an example to introduce passivity.
\begin{definition}[\cite{Khalil2008Nonlinear,Burger2013duality, sharf2020geometricpassivation}]\label{def:passivity}
    For the SISO system \eqref{eq:agents_decomp}, if there exists a positive semi-definite storage function $V_i(x_i)$, and scalars $\varepsilon_i$, $\delta_i$ such that
    \begin{equation}
        \dot{V_i}(x_i)\leq u_iy_i-\varepsilon_iy_i^2-\delta_i u_i^2, \quad \forall x_i,u_i,y_i,
    \end{equation}
    then, the system \eqref{eq:agents_decomp} is said to be
    \begin{enumerate}
        \item \emph{passive} if $\varepsilon_i=0$ and $\delta_i=0$,
        \item \emph{output strictly passive} (OP-$\varepsilon_i$)  if $\varepsilon_i>0$ and $\delta_i\geq0$,
        \item \emph{input strictly passive} (IP-$\delta_i$) if $\varepsilon_i\geq0$ and $\delta_i>0$.
        \item \emph{input-output passive} (IOP-$\delta_i,\varepsilon_i$) if $\varepsilon_i,\delta_i\in\mathbb{R}$ and $\varepsilon_i \delta_i < \tfrac{1}{4}$ (see more details in \cite{sharf2020geometricpassivation}). \hfill $\triangledown$
    \end{enumerate}
\end{definition}

\subsection{Output agreement and convergence to the agreement submanifold}\label{ssec:agreement_prelation}
This work analyzes the output agreement problem for network systems interacting over directed graphs from a passivity-based perspective. We first transform this problem to study the convergence of outputs to a submanifold and introduce how to conduct a passivity-based analysis for the reformulated problem.

\begin{definition}
   Consider the network system $(\Sigma,\Pi,\mathcal{D},w)$ defined in \eqref{eq:agents_decomp}-\eqref{eq:net_relations}. The system is said to achieve \emph{asymptotic output agreement} if for all initial conditions, 
$$\lim_{t\to\infty}y(t)=c\mathbb{1}_{\vert \mathcal{V}\vert}\in\mathrm{span}(\mathbb{1}_{\vert \mathcal{V}\vert}),$$
    where $c\in\mathbb{R}$ is the agreement value.  \hfill $\triangledown$
\end{definition}

Since $\mathrm{span}(\mathbb{1})$ forms a linear submanifold \cite{Max2017submanifold}, we denote it as the \emph{agreement submanifold} $S$, with orthogonal complement $S^\perp$ termed the \emph{disagreement submanifold}. Output consensus thus corresponds to convergence to $S$.
To determine convergence of $y(t)$ to $S$, we now introduce the space $\mathscr{L}_S^p$ defined with respect to $S$ as (\cite{Max2017submanifold,yue2024balance_passivity}),
\begin{equation}\label{eq:space_LSp}
    \mathscr{L}_S^p=\left\{f:\mathbb{R}\to U\,| {\mathsmaller{\int}}_\mathbb{R}d(f(t),S)^p {\rm d}t<\infty\right\},
\end{equation}
where $1\leq p<\infty$, $d(x,M)$ denotes the infimal Euclidean distance from $x$ to all the points in an embedded submanifold $M\subseteq \mathbb{R}^n$, and $U$ is a tubular neighborhood of $S$ \cite[Definition 1]{Max2017submanifold}.
The existence of $U$ is guaranteed by the fact that $S$ is smoothly embedded \cite[Chapter 10]{lee2024smoothmanifolds}. 
To handle outputs from unstable systems, we define the truncation operator that is also the orthogonal projection onto $S$:
$$r:\mathbb{R}^n\to S, \ x\mapsto\proj_S(x)=\tfrac{1}{n}\mathbb{1}_n\mathbb{1}_n^\top x,$$
and introduce the extended space
$$\bar{\mathscr{L}}_S^p= \left\{f|f_S^\tau\in\mathscr{L}_S^p, \  \forall \tau\in[0,\infty]\right\},$$
where $f_S^\tau(t)= f(t)$ for $t\in[0,\tau)$ and $f_S^\tau(t)= \proj_S(f(t))$ otherwise. For passivity-based analysis, we set $p=2$. However, ${\mathscr{L}}_S^p$ is not a normed space \cite{Max2017submanifold} and it is impossible to define passivity within ${\mathscr{L}}_S^p$ as we have no inner product on this space. Thus, we introduce the mapping

\begin{small}
\begin{equation}\label{eq:ThetaS}
     \Theta_S: \bar{\mathscr{L}}_S^2\to \bar{\mathscr{L}}^2, \ f(t)\mapsto \proj_{S^\perp}(f(t))=\left(I_n-\tfrac{1}{n}\mathbb{1}_n\mathbb{1}_n^\top\right)f(t),
\end{equation}
\end{small}
which projects signals onto
$S^\perp$.


Next, to connect output consensus to passivity, we define input-output relations for the forward path (agent relation $H_a$) and feedback path (controller relation $H_c$) of network system $(\Sigma,\Pi,\mathcal{D},w)$:
\begin{align}
    &(u(t),y(t))\in H_a\subset (\bar{\mathscr{L}}^2 \times \bar{\mathscr{L}}_S^2), \label{eq:Arelaiton}\\
    &(y(t),z(t))\in H_c\subset (\bar{\mathscr{L}}_S^2 \times \bar{\mathscr{L}}^2), \label{eq:Crelation}
\end{align}
where $u(t)+z(t)=w(t)\in\bar{\mathscr{L}}^2$.

Using these notions, the inner product between $u(t)\in\bar{\mathscr{L}}^2$ and $y(t)\in\bar{\mathscr{L}}_S^2$ is given by $$\langle u^\tau,\Theta_S(y_S^\tau)\rangle={\mathsmaller{\int}}_{-\infty}^\tau u^\top(t) \proj_{S^\perp}(y(t)) {\rm d}t,$$
which corresponds to a special case of \cite[Equation 51]{Max2017submanifold}.
This enables establishing passivity-based inequalities for $H_a$ and $H_c$ in terms of $u^\top \proj_{S^\perp}(y)$ and $z^\top \proj_{S^\perp}(y)$, respectively. Furthermore, if $\lim\limits_{t\to\infty}\proj_{S^\perp}(y(t))=0$ for bounded $y(t)$, then network system $(\Sigma,\Pi,\mathcal{D},w)$ achieves output agreement.

In \cite{yue2024balance_passivity}, we derived sufficient conditions for output agreement in network systems over balanced digraphs with output-strictly passive agents and edge controllers in two steps. First, establish passivity-based inequalities for $H_a$ and $H_c$ (using the passivity properties of agents and controllers):
\begin{align}
    &\dot{V}(x) \leq u^\top(t) \proj_{S^\perp}(y(t)) + g_1(u(t), y(t)), \label{eq:diss_agent} \\
    &\dot{W}(\eta) \leq z^\top(t) \proj_{S^\perp}(y(t)) + g_2(\mu(t), \zeta(t)), \label{eq:diss_controller}
\end{align}
where $g_1, g_2: \mathbb{R}^{\vert \mathcal{V}\vert} \times \mathbb{R}^{\vert \mathcal{V}\vert} \to \mathbb{R}$, and $V(x)$, $W(\eta)$ are positive semi-definite storage functions. Second, apply the following compensation theorem to analyze output agreement for $(\Sigma,\Pi,\mathcal{D},w)$.


\begin{theorem}[\cite{yue2024balance_passivity}]\label{thm:passivity_diss_outconsensus}
Consider a network system $(\Sigma, \Pi, \mathcal{D}, w)$, where $\mathcal{D}$ is a directed and with a globally reachable node. For agent dynamics \eqref{eq:agents_decomp}, let $y_i(t) = h_i(x_i)$ and assume that $f_i$ and $h_i$ are continuously differentiable. For the agent relation $H_a$ with \eqref{eq:diss_agent}, if there exists a controller relation $H_c$ with \eqref{eq:diss_controller} and a positive constant $\varepsilon$, such that the sum of \eqref{eq:diss_agent} and \eqref{eq:diss_controller} satisfies
\begin{equation}\label{eq:w_projy_QW}
    w^\top \proj_{S^\perp}(y)\geq \dot{V}+\dot{W}+\varepsilon\|\proj_{S^\perp}(y)\|_2^2,
\end{equation}
then, $\lim\limits_{t\to\infty}\hspace{-0.1cm}\proj_{S^\perp}(y(t))\hspace{-0.08cm}=\hspace{-0.08cm}0$, achieving output agreement. \hfill $\triangledown$
\end{theorem}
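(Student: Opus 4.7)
The plan is to treat $U := V + W$ as a Lyapunov-like storage for the closed loop. Rearranging \eqref{eq:w_projy_QW} directly yields the dissipation inequality
\begin{equation*}
\dot{U}(t) \;\leq\; w(t)^\top \proj_{S^\perp}(y(t)) \;-\; \varepsilon \|\proj_{S^\perp}(y(t))\|_2^2,
\end{equation*}
and integration over $[0,T]$, combined with the nonnegativity of $U$, gives
\begin{equation*}
\varepsilon \int_0^T \|\proj_{S^\perp}(y(t))\|_2^2 \, dt \;\leq\; U(0) + \int_0^T w(t)^\top \proj_{S^\perp}(y(t)) \, dt.
\end{equation*}
The proof therefore reduces to bounding the cross integral on the right uniformly in $T$.

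The main obstacle is that $w = B_i \mu$ is not exogenous but internally generated, so this cross integral has no a priori sign. My approach is to decompose $w = u + z$ using \eqref{eq:net_relations} and observe that $z^\top \proj_{S^\perp}(y) = \mu^\top E^\top y = \mu^\top \zeta$, since $E^\top \mathbb{1} = 0$. One can then apply \eqref{eq:diss_controller} to bound $\int_0^T z^\top \proj_{S^\perp}(y)\, dt$ in terms of $W(0)$ and an integrated controller residual, and handle $\int_0^T u^\top \proj_{S^\perp}(y)\, dt$ analogously via \eqref{eq:diss_agent}. The globally-reachable-node hypothesis is crucial here: it implies that the underlying graph of $\mathcal{D}$ is connected, so that $E^\top$ is injective on $S^\perp$ with a positive smallest nonzero singular value $\sigma$, giving the coercivity $\|\zeta\|_2 \geq \sigma\,\|\proj_{S^\perp}(y)\|_2$. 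Through a Young-type split, this coercivity lets the indefinite cross term be absorbed into the $-\varepsilon \|\proj_{S^\perp}(y)\|_2^2$ dissipation and closes the otherwise self-referential estimate.

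With $\proj_{S^\perp}(y) \in \mathcal{L}^2([0,\infty))$ thus in hand, I finish via Barbalat's lemma. The continuous differentiability of $f_i$ and $h_i$, together with boundedness of the trajectories inherited from the uniform bound on $U$, ensures that $\frac{d}{dt}\|\proj_{S^\perp}(y(t))\|_2^2$ is bounded, so the squared disagreement signal is uniformly continuous in $t$. Square integrability plus uniform continuity then forces $\proj_{S^\perp}(y(t)) \to 0$, which is exactly the desired output agreement. The single delicate step throughout is the cross-term bound in the middle paragraph: without either stronger controller properties (e.g.,\ input strict passivity giving an a priori $\mathcal{L}^2$ bound on $\mu$) or the spectral estimate afforded by the globally-reachable-node hypothesis, the indefinite term $w^\top \proj_{S^\perp}(y)$ cannot be tamed by the $-\varepsilon \|\proj_{S^\perp}(y)\|_2^2$ residual.
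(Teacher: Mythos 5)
Your skeleton --- integrate the combined dissipation inequality, deduce $\proj_{S^\perp}(y)\in\mathscr{L}_S^2$, and finish with Barbalat's lemma --- is the right shape, and you have correctly located the crux: $w=B_i\mu$ is internally generated, so $\int_0^T w^\top\proj_{S^\perp}(y)\,\mathrm{d}t$ has no a priori sign. The gap is that your proposed resolution of that crux does not work. The inequalities \eqref{eq:diss_agent} and \eqref{eq:diss_controller} are one-sided in the wrong direction: they give \emph{lower} bounds $u^\top\proj_{S^\perp}(y)\ge\dot V-g_1$ and $z^\top\proj_{S^\perp}(y)\ge\dot W-g_2$, so applying them to the decomposition $w=u+z$ only reproduces the integrated form of the hypothesis \eqref{eq:w_projy_QW} and yields no \emph{upper} bound on the cross integral --- the estimate is circular. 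The coercivity $\|\zeta\|_2\ge\sqrt{\lambda_2}\,\|\proj_{S^\perp}(y)\|_2$ (which does follow from the globally-reachable-node hypothesis, since $\zeta=E^\top\proj_{S^\perp}(y)$ and $\ker E^\top=S$ for a connected underlying graph) bounds $\|\zeta\|$ from \emph{below}, which is again the wrong direction: a Young split of $w^\top\proj_{S^\perp}(y)$ leaves you needing an $\mathscr{L}^2$ bound on $\|w\|_2=\|B_i\mu\|_2$, hence on $\mu$, which the hypotheses of the theorem do not supply --- as you concede in your own closing sentence. So the ``single delicate step'' is not merely delicate; it is the missing proof.

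Two further points. First, the trajectory boundedness you invoke for Barbalat is not ``inherited from the uniform bound on $U=V+W$'': the inequality $\dot U\le w^\top\proj_{S^\perp}(y)-\varepsilon\|\proj_{S^\perp}(y)\|_2^2$ does not make $U$ nonincreasing, precisely because of the uncontrolled cross term, so boundedness must be argued separately (compare the proof of Corollary \ref{coro:passive_agents}, which establishes boundedness from passivity and radial unboundedness of the forward- and feedback-path storage functions \emph{before} the compensation inequality is used). Second, the paper does not prove Theorem \ref{thm:passivity_diss_outconsensus} here at all; it is imported from \cite{yue2024balance_passivity}, whose analysis is restricted to \emph{balanced} digraphs, and balancedness gives $w^\top\proj_{S^\perp}(y)$ structure that your argument never exploits (for instance, with unit static controllers one gets $w^\top\proj_{S^\perp}(y)=\tfrac12\,y^\top L(\mathbb{G})y$, which is exactly half the feedback dissipation and can be absorbed). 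Your proof uses nothing about $w=B_i\mu$ beyond $w=u+z$, so it cannot distinguish the balanced case from an arbitrary digraph, and for arbitrary digraphs the claimed Lyapunov-style closure genuinely fails (e.g.\ for the linear protocol, $\tfrac12 y^\top Py$ is not monotone along trajectories).
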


This theorem is a direct summary and extension of Propositions 4 and 5, and Theorem 1 in \cite{yue2024balance_passivity}, where inequality \eqref{eq:w_projy_QW} corresponds to inequality (30) in \cite{yue2024balance_passivity}.

However, deriving passivity-based inequalities for the agent relation directly from passivity properties of agents presents certain limitations. In Section \ref{sec:submanifold_passivity}, we will discuss these limitations and propose a potential solution.

For brevity, we will henceforth omit the time argument $t$ of signals in pointwise passivity-based inequalities. Instead, we will use the shorthand notation $u,y,\zeta$ and $\mu$.

\section{Output agreement on digraphs and submanifold-constrained passivity}\label{sec:submanifold_passivity}
This section focuses on the output agreement problem for network systems $(\Sigma^o,\Pi,\mathcal{D},w)$ where $\mathcal{D}=(\mathcal{V},\mathcal{E})$ and agents $\Sigma_i^o$ follow the dynamics,
\begin{equation}\label{eq:sigma_o}
        \Sigma_i^o:\quad \dot{x}_i(t)=f_i(x_i(t),u_i(t)),\ \ 
        y_i(t)=h_i(x_i(t)), \ \ i\in\mathcal{V}
\end{equation}
where $f_i:\mathbb{R}^{n_i}\times\mathbb{R}\to\mathbb{R}^{n_i}$ and $h_i:\mathbb{R}^{n_i}\to\mathbb{R}$ are continuously differentiable functions.

We begin by examining the limitations of the existing approach from \cite{yue2024balance_passivity} and then generalize the approach to analyze network systems with arbitrary digraphs and any passive agents. This extension is facilitated by the concept of submanifold-constrained passivity for output agreement.

\subsection{From individual passivity to characterizations of relations: The limitations}\label{ssec:limitations_ecc}
This subsection reveals fundamental limitations of directly deriving passivity relations from individual passivity properties, as is done in the first step of the existing method.

Consider network systems over digraphs $(\Sigma^o, \Pi,\mathcal{D},w)$. To ensure the generality of our analysis, we assume the agents and controllers are input-output passive (see Definition \ref{def:passivity}), as input-output passivity encompasses all cases of passivity. The following proposition answers how agents' passivity affects the passivity-based properties of agent relation $H_a$.

\begin{proposition}\label{prop:IOP_agents}
    Consider a group of $|\mathcal{V}|$ SISO agents \eqref{eq:sigma_o}. Assume that for all $i\in\{1,\ldots,|\mathcal{V}|\}$, the agents $\Sigma_i^o$ are IOP-($\delta_i,\varepsilon_i$) where $\delta_i\varepsilon_i< \tfrac{1}{4}$. Let
    $\delta=\min_i(\delta_i)$ and $\varepsilon=\min_i(\varepsilon_i)$. Then, it follows that 
    \begin{equation}\label{eq:IOP_u_projy}
u^\top \proj_{S^\perp}(y)\geq \sum\nolimits_{i=1}^{|\mathcal{V}|} \dot{V}_i+(\delta-\tfrac{1}{2})\|u\|_2^2+(\varepsilon-\tfrac{1}{2})\|y\|_2^2,
\end{equation}
where $V_i(x)$'s are positive semi-definite storage functions. $\triangledown$
\end{proposition}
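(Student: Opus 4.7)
The plan is to derive the claimed inequality by aggregating the individual input-output passivity estimates and then absorbing the projection onto $S$ via a Young-type bound. I would start from the pointwise IOP-$(\delta_i,\varepsilon_i)$ inequality associated with each $\Sigma_i^o$, namely $\dot V_i(x_i) \le u_i y_i - \varepsilon_i y_i^2 - \delta_i u_i^2$, guaranteed by Definition~\ref{def:passivity}. Summing these over $i\in\mathcal{V}$ gives $\sum_i \dot V_i \le u^\top y - \sum_i \varepsilon_i y_i^2 - \sum_i \delta_i u_i^2$. Because $\varepsilon = \min_i \varepsilon_i$ and $\delta = \min_i \delta_i$ (with the $\min$ taken over possibly signed values), the simple sign argument $-\varepsilon_i y_i^2 \le -\varepsilon y_i^2$ and $-\delta_i u_i^2 \le -\delta u_i^2$ (valid since $y_i^2, u_i^2 \ge 0$ regardless of the sign of the minima) collapses the sums into uniform quadratic terms, yielding $\sum_i \dot V_i \le u^\top y - \varepsilon \|y\|_2^2 - \delta \|u\|_2^2$.

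Next, I would relate $u^\top y$ to $u^\top \proj_{S^\perp}(y)$ using the orthogonal decomposition $y = \proj_S(y) + \proj_{S^\perp}(y)$, which gives $u^\top y = u^\top \proj_{S^\perp}(y) + u^\top \proj_S(y)$. Applying Young's inequality to the tangential component yields $u^\top \proj_S(y) \le \tfrac{1}{2}\|u\|_2^2 + \tfrac{1}{2}\|\proj_S(y)\|_2^2$, and then the contraction property of the orthogonal projection, $\|\proj_S(y)\|_2^2 \le \|y\|_2^2$, gives the clean upper bound $u^\top y \le u^\top \proj_{S^\perp}(y) + \tfrac{1}{2}\|u\|_2^2 + \tfrac{1}{2}\|y\|_2^2$.

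Substituting this bound into the aggregated passivity estimate and rearranging recovers the claimed inequality with the coefficients $(\delta - \tfrac{1}{2})$ and $(\varepsilon - \tfrac{1}{2})$ appearing in front of $\|u\|_2^2$ and $\|y\|_2^2$, respectively. There is no genuine obstacle here; the proof is essentially mechanical. The only point that warrants minor care is that $\varepsilon_i, \delta_i$ may be negative under IOP, so one must justify the sign flip when passing to the minima (this is why the argument uses nonnegativity of $y_i^2$ and $u_i^2$ rather than any positivity of $\varepsilon_i, \delta_i$). This also clarifies why the resulting coefficients $(\delta - \tfrac{1}{2})$ and $(\varepsilon - \tfrac{1}{2})$ are typically negative, foreshadowing the passivity shortage that motivates the subsection's discussion of the limitations of deriving relation-level inequalities directly from agent-level passivity.
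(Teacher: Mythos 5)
Your proof is correct and follows essentially the same route as the paper: the paper invokes an intermediate inequality from its prior work (which already accounts for the $\proj_S(y)$ component via a $-\|u\|_2\|y\|_2$ cross term) and then applies Young's inequality, which is exactly the bound you derive from scratch before applying the same Young step. Your remark about justifying the sign flip for possibly negative $\varepsilon_i,\delta_i$ is a worthwhile point of care that the paper leaves implicit.
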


Next, derive the characterization of the controller relation $H_c$ from the controllers' passivity.
\begin{proposition}\label{prop:IOP_controllers}
    Consider a group of $|\mathcal{E}|$ SISO edge controllers \eqref{eq:controllers_decomp} in system $(\Sigma^o,\Pi,\mathcal{D})$. Let $\lambda_2$ be defined as above. Assume that for all $k\in\{1,\ldots,|\mathcal{E}|\}$, the controllers $\Pi_k$ are IOP-($\gamma_k,\alpha_k$) where $\gamma_k,\alpha_k\geq 0$ and $\gamma_k\alpha_k<\tfrac{1}{4}$ for all $k$. Then, it follows that
    \begin{equation}\label{eq:IOP_z_projy}
        z^\top \proj_{S^\perp}(y)\geq \sum\nolimits_{k=1}^{|\mathcal{E}|} \dot{W}_k+\alpha\|\mu\|_2^2+\gamma\lambda_2\|\proj_{S^\perp}(y)\|_2^2,
    \end{equation} 
    where $W_k(\eta)$'s denote positive semi-definite storage functions, $\alpha=\min_k(\alpha_k)$ and $\gamma=\min_k(\gamma_k)$. \hfill $\triangledown$
\end{proposition}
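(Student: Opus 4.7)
The plan is to derive \eqref{eq:IOP_z_projy} by summing the individual IOP inequalities of the controllers and then translating the resulting cross term via two structural identities involving the incidence matrix $E$. For each $k$, the IOP-$(\gamma_k,\alpha_k)$ property supplies a positive semi-definite $W_k(\eta_k)$ with
\begin{equation*}
\dot{W}_k \leq \zeta_k\mu_k - \gamma_k\zeta_k^2 - \alpha_k\mu_k^2,
\end{equation*}
consistent with Definition~\ref{def:passivity} applied to the controllers \eqref{eq:controllers_decomp} (so $\gamma_k$ is input strictness on $\zeta_k$ and $\alpha_k$ is output strictness on $\mu_k$). Summing over edges and using $\gamma = \min_k \gamma_k \geq 0$, $\alpha = \min_k \alpha_k \geq 0$ yields the aggregate bound $\mu^\top\zeta \geq \sum_{k=1}^{|\mathcal{E}|} \dot{W}_k + \alpha\|\mu\|_2^2 + \gamma\|\zeta\|_2^2$.

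The crux is the identification $z^\top\proj_{S^\perp}(y) = \mu^\top\zeta$. Writing $z = E\mu$ and expanding the projection via \eqref{eq:ThetaS}, the contribution $-\tfrac{1}{n}\mu^\top E^\top \mathbb{1}\mathbb{1}^\top y$ vanishes because every column of the incidence matrix carries exactly one $+1$ and one $-1$, so $\mathbb{1}^\top E = 0$. Consequently $z$ already lies in $S^\perp$, and $z^\top\proj_{S^\perp}(y) = z^\top y = \mu^\top E^\top y = \mu^\top\zeta$. This spares us the Young-type splitting that produces the $-\tfrac{1}{2}$ terms in Proposition~\ref{prop:IOP_agents} (where $u$ need not be orthogonal to $\mathbb{1}$), and is a direct consequence of working with the $(\Sigma,\Pi,\mathcal{D},w)$ structure of Fig.~\ref{fig:decomp}.

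It remains to replace $\gamma\|\zeta\|_2^2$ by $\gamma\lambda_2\|\proj_{S^\perp}(y)\|_2^2$. From $\zeta = E^\top y$ one has $\|\zeta\|_2^2 = y^\top E E^\top y = y^\top L(\mathbb{G})y$; because $L(\mathbb{G})\mathbb{1}=0$, the quadratic form is unchanged by the projection, so $y^\top L(\mathbb{G})y = \proj_{S^\perp}(y)^\top L(\mathbb{G})\proj_{S^\perp}(y) \geq \lambda_2\|\proj_{S^\perp}(y)\|_2^2$ by Courant--Fischer applied on $S^\perp$, where $\lambda_2$ denotes the Fiedler eigenvalue of the undirected counterpart $\mathbb{G}$ (nonzero under the standing connectivity assumption on $\mathcal{D}$). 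Substituting back gives \eqref{eq:IOP_z_projy}. The only nontrivial step is the identity $\mathbb{1}^\top E = 0$ and its consequence $z \in S^\perp$; everything else is a routine aggregation and Rayleigh-quotient bound.
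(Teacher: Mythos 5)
Your proposal is correct and follows essentially the same route as the paper: sum the individual IOP inequalities, use $\mathbb{1}^\top E=0$ to get $z^\top\proj_{S^\perp}(y)=\mu^\top\zeta$, and bound $\|\zeta\|_2^2=y^\top L(\mathbb{G})y\geq\lambda_2\|\proj_{S^\perp}(y)\|_2^2$ via the Rayleigh quotient restricted to $S^\perp$ (the paper isolates this last step as a separate proposition proved by eigendecomposition, but it is the same Courant--Fischer argument you invoke).
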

Note that this proposition holds for all digraphs. While this proposition is stated for non-negative $\alpha_k$ and $\gamma_k$, it remains valid for negative values if we substitute $\lambda_{|\mathcal{V}|}$ for $\lambda_2$ in \eqref{eq:IOP_z_projy}.

Now, consider the case where $\delta-\tfrac{1}{2}<0$ and $\varepsilon-\tfrac{1}{2}<0$ in \eqref{eq:IOP_u_projy}. To apply Theorem \ref{thm:passivity_diss_outconsensus}, \eqref{eq:IOP_z_projy} must compensate for these negative terms in \eqref{eq:IOP_u_projy}. While an appropriate $\alpha$ can compensate for $(\delta-\tfrac{1}{2})\|u\|_2^2$ \cite{yue2024balance_passivity}, compensation for $(\varepsilon-\tfrac{1}{2})\|y\|_2^2$ is impossible since \eqref{eq:IOP_z_projy} lacks a $\|y\|_2^2$ term. Consequently, the method in \cite{yue2024balance_passivity} cannot handle passive agents ($\delta=\varepsilon=0$) or input-strictly passive agents. 

This limitation is particularly significant since linear consensus protocols contains passive integrator agents and output-strictly passive controllers \cite{Mesbahi2010,Khalil2008Nonlinear}, so the existing approach is inadequate for analyzing even basic linear consensus protocols. To address this limitation, we introduce the concept of submanifold-constrained passivity.

\subsection{Submanifold-constrained passivity for output agreement}
Before defining submanifold-constrained passivity for the output agreement problem, we introduce the notion of a \emph{submanifold-constrained storage function}. Recall that storage functions $V(x)$ are continuously differentiable and positive semi-definite with $V(x)=0$ when $x=0$. They are inadequate for submanifold analysis as they cannot distinguish whether $x$ lies within or outside an expected submanifold. Inspired by \cite{Max2017submanifold}, we present the following refined definition to address this limitation.

\begin{definition}[$M$-constrained storage function]\label{def:SPassivity}
    Consider a smoothly embedded submanifold $M$. Let $x\in\mathbb{R}^n$ and $f:\mathbb{R}^n \to \mathbb{R}^n$. A differentiable function $Q: \mathbb{R}^n\to\mathbb{R}$ is called \emph{a constrained storage function} with respect to $M$, if
    \begin{enumerate}
        \item $Q(x)=0$, for all $x$ with $f(x)\in M$, and,
        \item $Q(x)>0$, otherwise.  \hfill $\triangledown$
    \end{enumerate}
\end{definition}

The submanifold-constrained passivity for the agreement problem is now defined.
\begin{definition}
    Consider a group of $n$ SISO systems $\Sigma_i^o$ and the agreement submanifold $S$. Suppose the systems are interconnected in parallel, and denote them by $\Sigma^o$. If there exists an $S$-constrained storage function $Q(x)$ and numbers $\delta,\varepsilon\in\mathbb{R}$, such that for all $t$,
    \begin{equation}\label{eq:SPassivity}
        u^\top \proj_{S^\perp}(y)\geq \tfrac{\partial Q(x)}{\partial x}\dot{x}+\varepsilon\|\proj_{S^\perp}(y)\|_2^2+\delta\|u\|_2^2,
    \end{equation}
    then, the system $\Sigma^o$ is said to be
    \begin{enumerate}
        \item \emph{$S$-passive} if $\varepsilon=\delta=0$,
        \item \emph{input $S$-passive} if $\varepsilon=0$ and $\delta\neq0$,
        \item \emph{output $S$-passive} if $\varepsilon\neq0$ and $\delta=0$,
        \item \emph{input-output $S$-passive} if $\varepsilon\delta<\tfrac{1}{4}$.  \hfill $\triangledown$
    \end{enumerate}
\end{definition}

By introducing submanifold-constrained passivity for output agreement, if an $S$-constrained storage function exists, the passivity-based inequality \eqref{eq:SPassivity} of agent relations $H_a$ no longer includes the $\|y\|_2^2$ term. This framework eliminates the need for individual storage functions for each agent, requiring only a single constrained storage function for the agent relation, thereby simplifying analysis and design.
\begin{example}
Consider $n$ SISO agents interconnected in parallel with dynamics $\Sigma: \dot{x}=x+u, \ y=x$. Then $\Sigma$ is output $S$-passive with $Q(x)=\tfrac{1}{2}x^\top(I-\tfrac{1}{n}\mathbb{1}\mathbb{1}^\top)x$.

We first show that $Q(x)$ is an $S$-constrained storage function. Since $(I-\tfrac{1}{n}\mathbb{1}\mathbb{1}^\top)$ is symmetric with eigenvalues $\{0,1\}$, the Rayleigh quotient yields $0\leq Q(x)\leq x^\top x$ \cite[Theorem 4.2.2]{horn2012matrix}. The kernel of $(I-\tfrac{1}{n}\mathbb{1}\mathbb{1}^\top)$ is $S=\mathrm{span}(\mathbb{1}_n)$, so, with $f(x)=x$, we have $Q(x)=0$ if and only if $x\in S$, and $Q(x)>0$ otherwise.
Then, computing $\tfrac{\partial Q}{\partial x}\dot{x}=x^\top(I-\tfrac{1}{n}\mathbb{1}\mathbb{1}^\top)(x+u)=u^\top\proj_{S^\perp}(y)+\|\proj_{S^\perp}(y)\|_2^2$ confirms output $S$-passivity.
\hfill $\triangledown$
\end{example}




Now, with the notion of submanifold-constrained passivity, we can establish a new compensation theorem.
\begin{theorem}\label{thm:SPassivity_outconsensus}
Consider a network system $(\Sigma^o, \Pi, \mathcal{D}, w)$, where $\mathcal{D}$ is a directed graph containing a globally reachable node. Suppose an $S$-constrained storage function $Q(x)$ exists so that the agent relation $H_a$ satisfies \eqref{eq:SPassivity}. If there exists a controller relation $H_c$, characterized by \eqref{eq:diss_controller} with a storage function $W(\eta)$, and a positive constant $\varepsilon$ such that the sum of \eqref{eq:SPassivity} and \eqref{eq:diss_controller} satisfies
\begin{equation}\label{eq:passive_compensate}
    w^\top \proj_{S^\perp}(y)\geq \dot{Q}+\dot{W}+\varepsilon\|\proj_{S^\perp}(y)\|_2^2,
\end{equation}
then the network system achieves output agreement. \hfill $\triangledown$
\end{theorem}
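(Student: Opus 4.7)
My plan is to follow the same compensation-based pipeline used for Theorem \ref{thm:passivity_diss_outconsensus}, since inequality \eqref{eq:passive_compensate} is structurally identical to \eqref{eq:w_projy_QW}: the $S$-constrained storage function $Q$ simply replaces the aggregate $V=\sum_i V_i$ of individual storage functions, and by Definition \ref{def:SPassivity} we retain $Q\geq 0$, which is all that the earlier argument really required of the agent-side storage. The novelty is conceptual (we are now free of individual storage functions), but the analytic skeleton carries over.

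First I would integrate \eqref{eq:passive_compensate} along a trajectory over $[0,\tau]$ and use non-negativity of $Q$ and $W$ to obtain
$$\varepsilon \int_0^\tau \|\proj_{S^\perp}(y(s))\|_2^2 \, ds \;\leq\; Q(x(0)) + W(\eta(0)) + \int_0^\tau w^\top(s)\proj_{S^\perp}(y(s)) \, ds.$$
Next I would control the coupling integral on the right by substituting $w=B_i\mu$ and exploiting $E^\top\mathbb{1}_{|\mathcal{V}|}=0$, so that $\zeta=E^\top\proj_{S^\perp}(y)$ and $\mu$ is driven only by disagreement information. For balanced digraphs, $B_i$ enjoys an antisymmetry that makes the cross term sign-definite; for a general digraph the globally-reachable-node hypothesis supplies a positive left null-vector of the out-Laplacian (a Perron-like vector), which is exactly the ingredient used in \cite{yue2024balance_passivity} to bound the asymmetric residue and absorb it into $\dot{Q}+\dot{W}$ together with the $\varepsilon\|\proj_{S^\perp}(y)\|_2^2$ margin. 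This step yields $\int_0^\infty\|\proj_{S^\perp}(y(s))\|_2^2\,ds<\infty$.

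Finally, I would upgrade square-integrability to a pointwise limit. Continuous differentiability of $f_i,h_i$ and boundedness of trajectories (which follows because $Q+W$ behaves as a Lyapunov-like function on the disagreement coordinates) render $\proj_{S^\perp}(y(\cdot))$ uniformly continuous, so Barbalat's lemma gives $\lim_{t\to\infty}\proj_{S^\perp}(y(t))=0$ and hence output agreement. The main obstacle is the middle step: the coupling integral involving $B_i$ is where passing from balanced to general directed graphs actually bites, and it is precisely what forces the strict inequality $\varepsilon>0$ and the globally-reachable-node assumption in the hypothesis.
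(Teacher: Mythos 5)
The paper does not actually print a proof of Theorem \ref{thm:SPassivity_outconsensus} (it is deferred to the extended version), so I can only judge your argument on its own terms and against the pieces the paper does show, notably the proof of Corollary \ref{coro:passive_agents}, which invokes this theorem. Your first and last steps are fine in outline, but the middle step --- the one you yourself flag as ``the main obstacle'' --- is a genuine gap, and it is exactly where the content of the theorem lies. After integrating \eqref{eq:passive_compensate} you are left with $\int_0^\tau w^\top \proj_{S^\perp}(y)\,\mathrm{d}t$, where $w=B_i\mu$ is an \emph{internal} signal of the loop; nothing in the hypotheses makes this integral bounded above, and your proposal to ``absorb the asymmetric residue into $\dot Q+\dot W$'' does not parse, since those derivatives are already fully accounted for on the other side of \eqref{eq:passive_compensate}. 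The appeal to an ``antisymmetry of $B_i$'' for balanced graphs and to a Perron left null-vector of $L_o$ for general digraphs is a gesture, not an argument: you never exhibit the inequality that turns $\int w^\top\proj_{S^\perp}(y)$ into something controlled by $Q(x(0))+W(\eta(0))$ and the $\varepsilon$-margin. Until that is done you have not shown $\proj_{S^\perp}(y)\in\mathscr{L}_S^2$, so the Barbalat step has nothing to act on.

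A second, independent problem is your boundedness claim. You assert that trajectories are bounded ``because $Q+W$ behaves as a Lyapunov-like function on the disagreement coordinates.'' But $Q$ is, by Definition \ref{def:SPassivity}, an $S$-constrained storage function: it vanishes on the entire agreement submanifold, so even a monotone $Q+W$ could never confine the consensus component of the state; and monotonicity itself is not available, since $\dot Q+\dot W\le w^\top\proj_{S^\perp}(y)-\varepsilon\|\proj_{S^\perp}(y)\|_2^2$ has no sign until the coupling term above is settled. The paper's own usage confirms that boundedness is \emph{not} extracted from the constrained storage function: in the proof of Corollary \ref{coro:passive_agents}, boundedness of $u,y,\zeta,\mu$ is established separately, from ordinary radially unbounded storage functions of the passive agents and controllers, before the compensation inequality is applied. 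Your proof therefore needs either an explicit boundedness hypothesis on the relations $H_a,H_c$ or a separate boundedness argument; as written, the uniform continuity required for Barbalat's lemma is unsupported.
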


Comparing \eqref{eq:passive_compensate} with \eqref{eq:w_projy_QW} in Theorem~\ref{thm:passivity_diss_outconsensus}, the key distinction is the utilization of the $S$-constrained storage function in \eqref{eq:passive_compensate}. Note that this compensation theorem can be applied to the study of network systems with arbitrary digraphs and agents exhibiting any passive properties.

This theorem emphasizes that constructing a constrained storage function is crucial for enabling a passivity-based analysis of the output agreement problem. While the theorem is somewhat abstract, we will apply it to investigate the output agreement of a group of passive agents and characterize the form of the $S$-constrained storage function.

\section{Output agreement of passive agents}\label{sec:passive_agreement}
This section analyzes the output agreement problems for network systems composed of a special class of passive systems, as described by,
\begin{equation}\label{eq:affine_upsilon}
    \Upsilon_i:\ \dot{x}_i(t)=u_i(t), \ \
    y_i(t)=h_i(x_i(t)), \ \ i\in\mathcal{V},
\end{equation}
where $x_i(t), u_i(t), y_i(t) \in \mathbb{R}$, and $h_i: \mathbb{R}^{n_i} \to \mathbb{R}$ are continuously differentiable monotone passive functions. We exclude the trivial case where $h_i(x_i) \equiv 0$. Note that this system is integrator-like, representing a cascade connection of an integrator and a passive memoryless function $h_i$.

Consider now the network system $(\Upsilon,\Pi,\mathcal{D},w)$. Agents $\Upsilon$ are passive, so the method proposed in \cite{yue2024balance_passivity} cannot be applied to analyze the output agreement problem for these systems, as discussed in subsection \ref{ssec:limitations_ecc}. To address this, we construct an $S$-constrained storage function for the agent relation, and provide sufficient conditions under which the systems achieve output agreement.

The following proposition provides sufficient conditions for the existence of such constrained storage functions.
\begin{proposition}\label{prop:dissipativity}
    Consider a group of $\vert\mathcal{V}\vert$ SISO agents $\Upsilon_i$. Let the agent relation $H_a$ be defined as above and $P=I-\tfrac{1}{\vert\mathcal{V}\vert}\mathbb{1}\mathbb{1}^\top$. Assume that for all $i\in\{1,\ldots,\vert\mathcal{V\vert}\}$ and $s\in\mathbb{R}$, there exists a positive constant $m$ such that $\tfrac{{\mathrm d}h_i(s)}{{\mathrm d}s}\leq m$, then, $H_a$ exhibits the following passivity-based characterization,
\begin{equation}\label{eq:upsilon_dissipativity}
    \begin{aligned}
         u^\top \proj_{S^\perp}(y)\geq \dot{Q}(x)-\tfrac{M}{2}\|u\|_2^2-\tfrac{M}{2}\|\proj_{S^\perp}(y)\|_2^2,
    \end{aligned}
    \end{equation}
    with the $S$-constrained storage function, $$Q(x)=\tfrac{1}{2}h^\top(x)P h(x),$$ where $M=\max({1,|1-m|})$. \hfill $\triangledown$
\end{proposition}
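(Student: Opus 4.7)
The plan is to exploit the integrator structure $\dot{x}_i = u_i$ together with the identity $\proj_{S^\perp}(y) = Py$, where $P = I - \tfrac{1}{\vert\mathcal{V}\vert}\mathbb{1}\mathbb{1}^\top$ is symmetric and idempotent. First, I would verify that $Q(x) = \tfrac{1}{2} h(x)^\top P h(x) = \tfrac{1}{2}\|Ph(x)\|_2^2$ qualifies as an $S$-constrained storage function in the sense of Definition~\ref{def:SPassivity} (with $f = h$): it is smooth and non-negative, and vanishes precisely when $Ph(x) = 0$, i.e., when $y = h(x) \in S$.

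Second, I would compute $\dot Q$ via the chain rule. Since each $h_i$ depends only on $x_i$, the Jacobian $\partial h/\partial x$ is the diagonal matrix $D(x) = \diag(h_1'(x_1),\ldots,h_{\vert\mathcal{V}\vert}'(x_{\vert\mathcal{V}\vert}))$, and $\dot{x}=u$, so $\dot Q = y^\top P D(x)\,u$. Using the symmetry of $P$,
\begin{equation*}
u^\top \proj_{S^\perp}(y) - \dot Q \;=\; y^\top P u - y^\top P D(x) u \;=\; (Py)^\top(I - D(x))u,
\end{equation*}
so the claim \eqref{eq:upsilon_dissipativity} reduces to the lower bound $(Py)^\top(I - D(x))u \geq -\tfrac{M}{2}\bigl(\|u\|_2^2 + \|Py\|_2^2\bigr)$.

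Third, I would bound the cross term by Cauchy--Schwarz followed by Young's inequality. The monotonicity of $h_i$ yields $h_i'(x_i) \geq 0$, while the hypothesis gives $h_i'(x_i) \leq m$, so $I - D(x)$ is diagonal with entries $1 - h_i'(x_i) \in [1-m,\,1]$ and thus its induced $2$-norm is at most $\max(1,|1-m|) = M$. Cauchy--Schwarz yields $|(Py)^\top (I-D(x)) u| \leq M\|Py\|_2\|u\|_2$, and $ab \leq \tfrac{1}{2}(a^2+b^2)$ completes the bound and hence the proof.

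The main subtlety is not the manipulation but the appearance of $M = \max(1,|1-m|)$: monotonicity only supplies the one-sided bound $h_i' \geq 0$, so the diagonal entries of $I - D(x)$ lie in the asymmetric interval $[1-m,1]$, whose maximum absolute value is $1$ when $m \leq 2$ and $m-1$ when $m > 2$. Beyond this, the argument is routine, and it illustrates precisely the advantage emphasized in Section~\ref{sec:submanifold_passivity}: the single constrained storage function $Q$ encodes the entire forward path without requiring per-agent storage functions.
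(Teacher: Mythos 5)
Your proposal is correct and follows essentially the same route as the paper's proof: compute $\dot Q$ via the chain rule, rewrite $u^\top \proj_{S^\perp}(y)-\dot Q$ as the cross term $(Py)^\top(I-D(x))u$, bound $\|I-D(x)\|_2$ by $M$ using $0\le h_i'\le m$, and finish with Cauchy--Schwarz and Young's inequality. Your explicit verification that $Q$ is an $S$-constrained storage function and your correct first-power form of the Cauchy--Schwarz bound are minor refinements over the paper's write-up, not a different argument.
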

\begin{proof}
    We start by computing the derivative of $Q(x)$ with respect to $t$, i.e.,
    \begin{equation}
        \dot{Q}(x)=\tfrac{\partial Q(x)}{\partial x}\dot{x}=\tfrac{\partial Q(x)}{\partial h(x)}\tfrac{\partial h(x)}{\partial x}\dot{x}=(Ph(x))^\top\tfrac{\partial h(x)}{\partial x} u,
    \end{equation}
    where $\tfrac{\partial h(x)}{\partial x}=\diag\left(\left[\tfrac{\partial h_1(x_1)}{\partial x_1},\ldots,\tfrac{\partial h_{\vert\mathcal{V}\vert}(x_{\vert\mathcal{V}\vert})}{\partial x_{\vert\mathcal{V}\vert}}\right]\right)$ is the Jacobian of $h(x)$.
    
    Now, considering $u^\top \proj_{S^\perp}(y)-\dot{Q}(x)$, we arrive that,
    \begin{equation}\label{eq:uprojy_dotQ}
    \begin{aligned}
        &u^\top \proj_{S^\perp}(y)-\dot{Q}(x)=u^\top (I-\tfrac{\partial h(x)}{\partial x})\proj_{S^\perp}(y)\\
        &=u^\top \diag\left(\left[1-\tfrac{\partial h_1(x_1)}{\partial x_1},1-\tfrac{\partial h_{\vert\mathcal{V}\vert}(x_{\vert\mathcal{V}\vert})}{\partial x_{\vert\mathcal{V}\vert}}\right]\right)\proj_{S^\perp}(y)
    \end{aligned}
    \end{equation}
    where we use $Ph(x)=\proj_{S^\perp}(y)$.
    
    Then, we show that the spectral norm of the diagonal matrix $I-\tfrac{\partial h(x)}{\partial x}$ has an upper bound $M=\max(1,|1-m|)$. Recall that the spectral norm of a diagonal matrix is the largest absolute value of its diagonal entries. We have that $\|I-\tfrac{\partial h(x)}{\partial x}\|=\max_i\vert 1-\tfrac{\partial h_i(x_i)}{\partial x_i}\vert$. Since $h_i$ are monotone passive functions that have bounded derivatives, the relationship $0\leq\tfrac{\partial h_i(x_i)}{\partial x_i}\leq m$ holds for all $x_i$. Consequently, for all $i$, we have $1-m\leq 1-\tfrac{\partial h_i(x_i)}{\partial x_i}\leq 1$ and $\vert 1-\tfrac{\partial h_i(x_i)}{\partial x_i}\vert\leq M$.
    
    Now, apply the Cauchy-Schwarz inequality to \eqref{eq:uprojy_dotQ},
\begin{equation}
    \begin{aligned}
        u^\top (I-\tfrac{\partial h(x)}{\partial x})&\proj_{S^\perp}(y)\leq \vert u^\top (I-\tfrac{\partial h(x)}{\partial x})\proj_{S^\perp}(y)\vert \\
        &\leq \|I-\tfrac{\partial h(x)}{\partial x}\|_2 \|u\|_2^2\|\proj_{S^\perp}(y)\|_2^2 \\
        &\leq \tfrac{M}{2}\|u\|_2^2+\tfrac{M}{2}\|\proj_{S^\perp}(y)\|_2^2,
    \end{aligned}
\end{equation}
and we obtain that 
\begin{equation}\label{eq:lower_bound}
    \begin{aligned}
        u^\top \proj_{S^\perp}(y)&-\dot{Q}(x)=u^\top (I-\tfrac{\partial h(x)}{\partial x})\proj_{S^\perp}(y) \\
        &\geq -\tfrac{M}{2}\|u\|_2^2-\tfrac{M}{2}\|\proj_{S^\perp}(y)\|_2^2.
    \end{aligned}
\end{equation}
This completes the proof.
\end{proof}

From Proposition \ref{prop:IOP_controllers}, Theorem \ref{thm:SPassivity_outconsensus} and Proposition \ref{prop:dissipativity}, we can derive the sufficient conditions for $(\Upsilon,\Pi,\mathcal{D},w)$ to achieve output agreement.
\begin{corollary}\label{coro:passive_agents}
    Consider a network system $(\Upsilon,\Pi,\mathcal{D},w)$ where $\mathcal{D}$ is a digraph with a globally reachable node. Assume that the edge controllers are input-output passive. Let $\max(D_o)$ be the maximal out-degree of graph $\mathcal{D}$. For the agent relation $H_a$ with characterization \eqref{eq:upsilon_dissipativity}, if there exists a controller relation $H_c$ with characterization \eqref{eq:IOP_z_projy} where $\alpha\geq\max(D_o)\tfrac{M}{2}$ and $\gamma\lambda_2>\tfrac{M}{2}$, then, the network system achieves output agreement. \hfill $\triangledown$
\end{corollary}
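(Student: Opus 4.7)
The plan is to verify the compensation hypothesis \eqref{eq:passive_compensate} of Theorem~\ref{thm:SPassivity_outconsensus} by summing the two passivity-based inequalities supplied by Propositions~\ref{prop:dissipativity} and~\ref{prop:IOP_controllers}, and then using the feedback identity $u=-B_o\mu$ from \eqref{eq:net_relations} to dispose of the residual input-energy term.

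First I would add \eqref{eq:upsilon_dissipativity} for the agent relation $H_a$ to \eqref{eq:IOP_z_projy} for the controller relation $H_c$. Because $u+z=w$, this yields
\begin{equation*}
w^\top\proj_{S^\perp}(y)\geq \dot{Q}+\dot{W}+\bigl(\gamma\lambda_2-\tfrac{M}{2}\bigr)\|\proj_{S^\perp}(y)\|_2^2+\alpha\|\mu\|_2^2-\tfrac{M}{2}\|u\|_2^2,
\end{equation*}
where $W:=\sum_{k=1}^{|\mathcal{E}|} W_k$. The only mismatch with the form required by Theorem~\ref{thm:SPassivity_outconsensus} is the leftover $-\tfrac{M}{2}\|u\|_2^2$ term, so the remaining work is to absorb it into $\alpha\|\mu\|_2^2$.

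For the absorption step I would exploit the structure of the out-incidence matrix $B_o$: each of its columns carries a single nonzero entry of magnitude one, so $B_oB_o^\top$ is diagonal with the out-degrees on its diagonal, and therefore $\|B_o\|_2^2=\max(D_o)$ and
\begin{equation*}
\|u\|_2^2=\mu^\top B_o^\top B_o\,\mu \leq \max(D_o)\,\|\mu\|_2^2.
\end{equation*}
The hypothesis $\alpha\geq\max(D_o)\tfrac{M}{2}$ then gives $\alpha\|\mu\|_2^2-\tfrac{M}{2}\|u\|_2^2\geq 0$, while $\gamma\lambda_2>\tfrac{M}{2}$ makes $\varepsilon:=\gamma\lambda_2-\tfrac{M}{2}$ a strictly positive constant. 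Substituting back collapses the summed inequality to
\begin{equation*}
w^\top\proj_{S^\perp}(y)\geq \dot{Q}+\dot{W}+\varepsilon\|\proj_{S^\perp}(y)\|_2^2,
\end{equation*}
which is precisely the form \eqref{eq:passive_compensate}. Since $\mathcal{D}$ has a globally reachable node and $Q$ is $S$-constrained by Proposition~\ref{prop:dissipativity}, Theorem~\ref{thm:SPassivity_outconsensus} then yields $\lim_{t\to\infty}\proj_{S^\perp}(y(t))=0$, i.e., asymptotic output agreement.

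The main subtlety I anticipate is not the passivity chain itself but the graph-algebraic step, namely justifying $\|B_o\|_2^2=\max(D_o)$ from the incidence decomposition introduced in the preliminaries; once this spectral identity is available, the argument reduces to algebraic bookkeeping and a direct invocation of Theorem~\ref{thm:SPassivity_outconsensus}.
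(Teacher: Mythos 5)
Your compensation chain is essentially the paper's own argument: the paper likewise substitutes $u=-B_o\mu$ into \eqref{eq:upsilon_dissipativity}, invokes $-\|u\|_2^2\geq-\max(D_o)\|\mu\|_2^2$ (citing prior work rather than re-deriving it; your spectral justification via $B_oB_o^\top=\diag$ of the out-degrees is correct and a nice explicit version of that step), and then lets $\alpha\geq\max(D_o)\tfrac{M}{2}$ and $\gamma\lambda_2>\tfrac{M}{2}$ absorb the negative terms so that Theorem~\ref{thm:SPassivity_outconsensus} applies with $\varepsilon=\gamma\lambda_2-\tfrac{M}{2}>0$. That part of your proposal is sound and matches the paper.

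However, you skip a step the paper treats as essential: before doing any compensation, the paper's proof first establishes that the trajectories of $(\Upsilon,\Pi,\mathcal{D},w)$ are bounded. It does this by noting that the forward path is passive with the radially unbounded storage function $\sum_{i=1}^{\vert\mathcal{V}\vert}\int_0^{x_i}h_i(s)\,{\rm d}s$ and that the feedback path is passive under the stated assumptions, so a global dissipation inequality with a radially unbounded storage function forces bounded trajectories, hence bounded $u,y,\zeta,\mu$ and bounded relations $H_a$, $H_c$. This is not mere bookkeeping in this framework: output agreement is defined as $y(t)\to c\mathbb{1}_{\vert\mathcal{V}\vert}$ for a specific $c$, and the preliminaries only assert that $\lim_{t\to\infty}\proj_{S^\perp}(y(t))=0$ implies output agreement \emph{for bounded} $y(t)$ --- decay of the distance to $S$ alone does not rule out the consensus component drifting off to infinity. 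Moreover, the relations \eqref{eq:Arelaiton}--\eqref{eq:Crelation} live in extended $\mathscr{L}_S^2$-type spaces, so their well-posedness also rests on this boundedness. You should add this boundedness argument before invoking Theorem~\ref{thm:SPassivity_outconsensus}; with it, your proof is complete and coincides with the paper's.
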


This corollary highlights the interplay between the passivity properties of controllers and the structural properties of the graph (e.g., $\max(D_o)$ and $\lambda_2$) in shaping the output agreement behavior of networked systems $(\Upsilon,\Pi,\mathcal{D},w)$. Consequently, by carefully designing both the graph topology and the controllers, we can guarantee that the system achieves output agreement.

However, Proposition \ref{prop:dissipativity} and Corollary \ref{coro:passive_agents} exhibit certain limitations that warrant attention. First, the lower bound in \eqref{eq:lower_bound} holds for all $u^\top\proj_{S^\perp}(y)-\dot{Q}$, but a tighter bound on this difference remains to be determined. Additionally, the conditions provided are merely sufficient for achieving output consensus. Identifying a passivity-based condition that is both sufficient and necessary is left as an open problem for future research.

\section{CASE STUDIES}\label{sec:case_study}
In this section, we first leverage the analysis approach introduced in the previous section to provide a submanifold-based explanation of the consensus behaviors of linear consensus protocols. Subsequently, we analyze the output agreement problem for a heterogeneous network system.
\subsection{Linear consensus protocol for directed networks}
We recall that the linear consensus protocol for directed networks \cite{Mesbahi2010} discusses the consensus behavior of a group of integrators $\Upsilon_i^l$ that interact over a directed graph $\mathcal{G}$ with edge controllers $\Pi_k^l$, where
\begin{equation}
\begin{aligned}
    \Upsilon_i^l:\  \dot{x}_i=u_i, \ y_i=x_i,\ \
    \Pi_k^l:\ \mu_k=\zeta_k.
\end{aligned} 
\end{equation}
In this case, assume that the graph is directed with a globally reachable node. The protocol is denoted by $(\Upsilon^l,\Pi^l,\mathcal{D},w)$, and this system satisfies the relation given in \eqref{eq:net_relations}.

Integrators are passive \cite[Example 6.2]{Khalil2008Nonlinear}, and satisfy the conditions to apply Proposition \ref{prop:dissipativity}. Thus, to establish the $S$-constrained passivity, we choose the manifold-constained storage function as $Q(x)=\tfrac{1}{2}x^\top Px$ with $P=(I-\tfrac{1}{\vert\mathcal{V}\vert}\mathbb{1}\mathbb{1}^\top)$. Then, the agent relation $H_a$ satisfies,
\begin{align}
    u^\top \proj_{S^\perp}(y)\geq \dot{Q}(x).
\end{align}

Recall that the edge controllers $\Pi_k^l$ are output strictly passive (i.e., also input-output passive). Then, the passivity-based inequality of the controller relation $H_c$ satisfies,
\begin{equation}
\begin{aligned}
      z^\top \proj_{S\perp}(y)&=z^\top y=\mu^\top E^\top y=\mu^\top \zeta= \|\mu\|_2^2 \\&=\|\zeta\|_2^2
    =y^\top Ly\geq \lambda_2\|\proj_{S^\perp}(y)\|_2^2,
\end{aligned}
\end{equation}
where $\lambda_2>0$ because $\mathcal{D}$ contains a globally reachable node \cite[Theoreom 6.6, Corollart 6.8]{bullo2020lectures}. Then, by applying Corollary \ref{coro:passive_agents}, the linear consensus protocol $(\Upsilon^l,\Pi^l,\mathcal{D},w)$ achieves output agreement. This aligns with the consensus behavior of this protocol \cite{Mesbahi2010}.

\subsection{A heterogeneous network system}
Consider a networked system of $5$ SISO agents $\Upsilon_i$ with edge controllers $\Pi_k^l: \mu_k = b\zeta_k$. The output equations for $\Upsilon_i$ are $y_1 = x_1$, $y_2 = x_2$, $y_3 = \tanh(x_3)$, $y_4 = \tanh(x_4)$, and $y_5 = \tfrac{x_5}{(1 + |x_5|)}$. These output functions $h_i$'s are monotonically passive with derivatives upper-bounded by $1$, yielding $M = 1$ in \eqref{eq:upsilon_dissipativity} by Proposition \ref{prop:dissipativity}. The agents are interconnected over the strongly connected digraph shown in Fig. \ref{fig:cases_graph}. The graph has $\lambda_2=3$ and maximal out-degree $\max(D_o)=2$. Also, the corresponding controller relation $H_c$ satisfies,
\begin{equation*}
    z^\top\proj_{S^\perp}(y)=b\mu^\top \zeta= ab\|\zeta\|_2^2+\tfrac{1-a}{b}\|\mu\|_2^2, \ 0< a< 1.
\end{equation*}

By Corollary \ref{coro:passive_agents}, achieving output agreement requires $ab \geq 1$ and $\lambda_2 \tfrac{1-a}{b} > \tfrac{1}{2}$. Fig. \ref{fig:cases_agreement} shows illustrates the output trajectories of each agent with $b = 2$, $a = \tfrac{1}{2}$, and initial conditions $x(0) = [0.23,-0.2,1,-2.4,0]^\top$. Since both conditions are satisfied, the heterogeneous networked system achieves output consensus with agreement value $0.1776$.

\begin{figure}[!ht]
\centering
    \begin{subfigure}{0.10\textwidth}
\centering
\raisebox{0.8cm}{
    \begin{tikzpicture}[scale = .6]
        \Vertex[size=.3,x=0, y=0, label=$1$,opacity =.5]{v1} 
        \Vertex[size=.3,x=-1.5, y=-1.3, label=$2$,opacity =.5]{v2} 
        \Vertex[size=.3,x=1.5, y=-1.3, label=$3$,opacity =.5]{v3}
        \Vertex[size=.3,x=-1, y=-2.6, label=$4$
        ,opacity =.5]{v4}
        \Vertex[size=.3,x=1, y=-2.6, label=$5$,opacity =.5]{v5}
       
        \Edge[Direct](v1)(v2)
        \Edge[Direct](v2)(v3)
        \Edge[Direct](v4)(v3)
        \Edge[Direct](v5)(v4)
        \Edge[Direct](v2)(v5)
        \Edge[Direct](v3)(v1)
        \Edge[Direct](v4)(v1)
        \Edge[Direct](v5)(v1)

       \end{tikzpicture}
       }
       \centering
    \caption[]{}
    \label{fig:cases_graph}
\end{subfigure}
\begin{subfigure}{0.35\textwidth}
    \centering
    \includegraphics[width=0.65\linewidth]{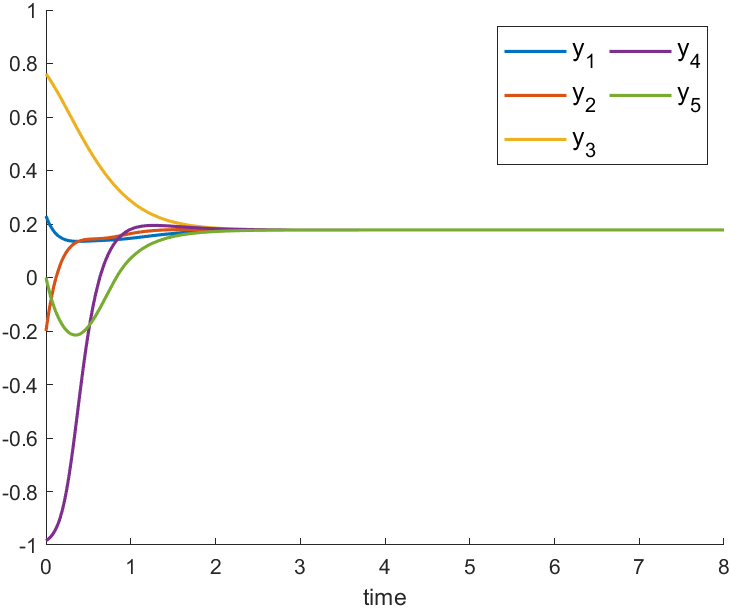}
    \caption[]{}
    \label{fig:cases_agreement}
\end{subfigure}
\caption{The output agreement for the heterogeneous network. (a) The underlying digraph. (b) The output of each agent.}
\end{figure}

\begin{figure}[!h]
\centering
\begin{subfigure}{0.10\textwidth}
\centering
\raisebox{0.8cm}{
    \begin{tikzpicture}[scale = .6]
        
        \Vertex[size=.3,x=0, y=0, label=$1$,opacity =.5]{v1} 
        \Vertex[size=.3,x=-0.8, y=-1, label=$2$,opacity =.5]{v2} 
        \Vertex[size=.3,x=-1.6, y=-2, label=$3$,opacity =.5]{v3}
        \Vertex[size=.3,x=0.8, y=-1, label=$4$
        ,opacity =.5]{v4}
        \Vertex[size=.3,x=1.6, y=-2, label=$5$,opacity =.5]{v5}
       
        \Edge[Direct](v3)(v2)
        \Edge[Direct](v2)(v1)
        \Edge[Direct](v5)(v4)
        \Edge[Direct](v4)(v1)

       \end{tikzpicture}
       }
       \centering
    \caption[]{}
    \label{fig:cases_unbgraph}
\end{subfigure}
\begin{subfigure}{0.35\textwidth}
    \centering
    \includegraphics[width=0.65\linewidth]{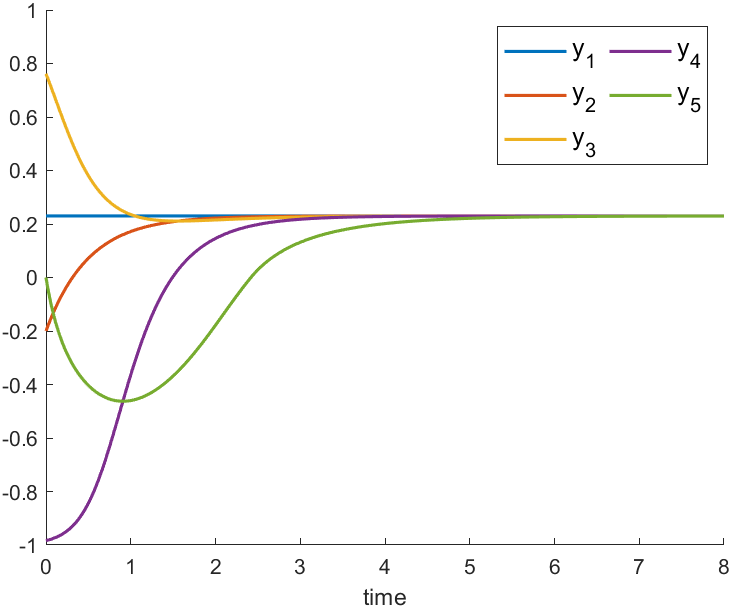}
    \caption[]{}
    \label{fig:cases_negative}
\end{subfigure}
\caption{A negative example of the output agreement for the heterogeneous network. (a) The underlying digraph. (b) The output of each agent.}
\end{figure}

We now present an example to demonstrate that Corollary \ref{coro:passive_agents} provides only sufficient conditions for output agreement. Consider the same agents and controllers interconnected over the digraph shown in Fig. \ref{fig:cases_unbgraph}, where $\lambda_2 = 0.382$ and $\max(D_o) = 1$. For this graph, although no values of $a$ and $b$ satisfy both $ab \geq \tfrac{1}{2}$ and $\lambda_2 \tfrac{1-a}{b} > \tfrac{1}{2}$, the network still achieves output consensus with agreement value $0.23$.

\section{CONCLUDING REMARKS}\label{sec:concluding}
In this work, we present a passivity-based analysis for the output agreement problem of network systems over directed graphs. First, we establish connections between passivity properties of system relations and the passivity of individual agents (controllers), revealing fundamental limitations. To address these limitations, we introduce submanifold-constrained passivity and develop a compensation theorem that guarantees output agreement. Finally, we apply the new theorem to analyze network systems composed of passive agents. In future work, we aim to establish passivity-based sufficient and necessary conditions for network systems to achieve output agreement, and extend the $S$-passivity (Definition \ref{def:SPassivity}) to arbitrary embedded submanifold.

\bibliographystyle{IEEEtran}
\bibliography{ref}

\begin{thebibliography}{10}
\providecommand{\url}[1]{#1}
\csname url@samestyle\endcsname
\providecommand{\newblock}{\relax}
\providecommand{\bibinfo}[2]{#2}
\providecommand{\BIBentrySTDinterwordspacing}{\spaceskip=0pt\relax}
\providecommand{\BIBentryALTinterwordstretchfactor}{4}
\providecommand{\BIBentryALTinterwordspacing}{\spaceskip=\fontdimen2\font plus
\BIBentryALTinterwordstretchfactor\fontdimen3\font minus
  \fontdimen4\font\relax}
\providecommand{\BIBforeignlanguage}[2]{{%
\expandafter\ifx\csname l@#1\endcsname\relax
\typeout{** WARNING: IEEEtran.bst: No hyphenation pattern has been}%
\typeout{** loaded for the language `#1'. Using the pattern for}%
\typeout{** the default language instead.}%
\else
\language=\csname l@#1\endcsname
\fi
#2}}
\providecommand{\BIBdecl}{\relax}
\BIBdecl

\bibitem{sandberg2022secure}
H.~Sandberg, V.~Gupta, and K.~H. Johansson, ``Secure networked control
  systems,'' \emph{Annual Review of Control, Robotics, and Autonomous Systems},
  vol.~5, no.~1, pp. 445--464, 2022.

\bibitem{amirkhani2022consensus}
A.~Amirkhani and A.~H. Barshooi, ``Consensus in multi-agent systems: a
  review,'' \emph{Artificial Intelligence Review}, vol.~55, no.~5, pp.
  3897--3935, 2022.

\bibitem{ChungTRO2018AerialSwarm}
S.-J. Chung, A.~A. Paranjape, P.~M. Dames, S.~Shen, and V.~R. Kumar, ``A survey
  on aerial swarm robotics,'' \emph{IEEE Transactions on Robotics}, vol.~34,
  pp. 837--855, 2018.

\bibitem{Mesbahi2010}
M.~Mesbahi and M.~Egerstedt, \emph{Graph Theoretic Methods in Multiagent
  Networks}, ser. Princeton Series in Applied Mathematics.\hskip 1em plus 0.5em
  minus 0.4em\relax Princeton University Press, 2010.

\bibitem{arcak2007passivitydesign}
M.~Arcak, ``Passivity as a design tool for group coordination,'' \emph{IEEE
  Transactions on Automatic Control}, vol.~52, no.~8, pp. 1380--1390, 2007.

\bibitem{Burger2013duality}
M.~B{\"u}rger, D.~Zelazo, and F.~Allg{\"o}wer, ``Duality and network theory in
  passivity-based cooperative control,'' \emph{Automatica}, vol.~50, pp.
  2051--2061, 2013.

\bibitem{Sharf2019MIMO}
M.~Sharf and D.~Zelazo, ``Analysis and synthesis of {MIMO} multi-agent systems
  using network optimization,'' \emph{IEEE Transactions on Automatic Control},
  vol.~64, pp. 4512--4524, 2019.

\bibitem{Rockafellar1998NetOpt}
R.~T. Rockafellar, \emph{{Network Flows and Monotropic Optimization}}.\hskip
  1em plus 0.5em minus 0.4em\relax Athena Scientific, 1998.

\bibitem{Max2017submanifold}
J.~M. Montenbruck, M.~Arcak, and F.~Allgöwer, ``An input–output framework
  for submanifold stabilization,'' \emph{IEEE Transactions on Automatic
  Control}, vol.~62, no.~10, pp. 5170--5184, 2017.

\bibitem{OlfatiSaber2007}
R.~Olfati-Saber, J.~A. Fax, and R.~M. Murray, ``Consensus and cooperation in
  networked multi-agent systems,'' \emph{Proceedings of the IEEE}, vol.~95,
  no.~1, pp. 215--233, 2007.

\bibitem{li2019passivition}
M.~Li, L.~Su, and G.~Chesi, ``Consensus of heterogeneous multi-agent systems
  with diffusive couplings via passivity indices,'' \emph{IEEE Control Systems
  Letters}, vol.~3, no.~2, pp. 434--439, 2019.

\bibitem{yue2024balance_passivity}
F.-Y. Yue and D.~Zelazo, ``A passivity analysis for nonlinear consensus on
  balanced digraphs,'' \emph{European Journal of Control}, vol.~85, p. 101303,
  2025.

\bibitem{bullo2020lectures}
F.~Bullo, \emph{Lectures on network systems}.\hskip 1em plus 0.5em minus
  0.4em\relax Kindle Direct Publishing Seattle, DC, USA, 2020.

\bibitem{restrepo2021edgeLyapunov}
E.~Restrepo, A.~Lor{\'\i}a, I.~Sarras, and J.~Marzat, ``Edge-based strict
  lyapunov functions for consensus with connectivity preservation over directed
  graphs,'' \emph{Automatica}, vol. 132, p. 109812, 2021.

\bibitem{Khalil2008Nonlinear}
H.~K. Khalil, \emph{Nonlinear Systems, 3rd ed}.\hskip 1em plus 0.5em minus
  0.4em\relax Harlow, U.K.: Pearson, 2001.

\bibitem{sharf2020geometricpassivation}
M.~Sharf, A.~Jain, and D.~Zelazo, ``Geometric method for passivation and
  cooperative control of equilibrium-independent passive-short systems,''
  \emph{IEEE Transactions on Automatic control}, vol.~66, no.~12, pp.
  5877--5892, 2020.

\bibitem{lee2024smoothmanifolds}
J.~M. Lee, \emph{Introduction to Smooth Manifolds}.\hskip 1em plus 0.5em minus
  0.4em\relax Springer, 2003.

\bibitem{horn2012matrix}
R.~A. Horn and C.~R. Johnson, \emph{Matrix analysis}.\hskip 1em plus 0.5em
  minus 0.4em\relax Cambridge university press, 2012.

\end{thebibliography}




\section*{APPENDIX}
\subsection*{Proof of Proposition \ref{prop:IOP_agents}}
According to \cite[Proposition 5]{yue2024balance_passivity}, we have $u^\top \proj_{S^\perp}(y)\geq \sum_{i=1}^{|\mathcal{V}|} \dot{V}_i+\delta\|u\|_2^2+\varepsilon\|y\|_2^2-\|u\|_2\|y\|_2$. Apply Young's inequality (i.e., $ab\leq\tfrac{a^2+b^2}{2}$ for nonnegative $a,b$) to $\|u\|_2\|y\|_2$, and we complete the proof.

\subsection*{Proof of Proposition \ref{prop:IOP_controllers}}
To establish the relationship between controllers' passivity and the characterization of the controller relation $H_c$, we need the following statement.
\begin{proposition}\label{prop:yLy_projy}
    Consider a digraph $\mathcal{D}$ with a globally reachable node. Let $\mathbb{G}=(\mathcal{V},\mathbb{E})$ be the undirected counterpart of $\mathcal{D}$ and $L(\mathbb{G})$ be the graph Laplacian of graph $\mathbb{G}$. Then, for any vectors $y\in\mathbb{R}^{{\vert\mathcal{V}\vert}}$, the following inequality holds,
    \begin{equation}
        \lambda_2 \|\proj_{S^\perp}(y)\|_2^2  \leq y^\top L(\mathbb G)y\leq \lambda_{\vert\mathcal{V}\vert}  \|\proj_{S^\perp}(y)\|_2^2,
    \end{equation}
    where $\lambda_2$ and $\lambda_{\vert\mathcal{V}\vert}$ denote the second smallest and the largest eigenvalues of $L(\mathbb{G})$, respectively. \hfill $\triangledown$
\end{proposition}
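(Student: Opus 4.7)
The plan is to exploit the spectral structure of the symmetric Laplacian $L(\mathbb{G})$ and reduce the claim to a standard Rayleigh-quotient argument restricted to the subspace $S^\perp$.

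First, I would observe that because $\mathcal{D}$ contains a globally reachable node, its undirected counterpart $\mathbb{G}$ is connected. Consequently $L(\mathbb{G}) = EE^\top$ is symmetric and positive semi-definite, its kernel is exactly $S = \mathrm{span}(\mathbb{1}_{|\mathcal{V}|})$, and its eigenvalues satisfy $0 = \lambda_1 < \lambda_2 \leq \cdots \leq \lambda_{|\mathcal{V}|}$. Writing the orthogonal decomposition $y = \proj_S(y) + \proj_{S^\perp}(y)$ and using $L(\mathbb{G})\mathbb{1}_{|\mathcal{V}|} = 0$, the $S$-component is annihilated, so
\begin{equation*}
y^\top L(\mathbb{G}) y \;=\; \proj_{S^\perp}(y)^\top L(\mathbb{G})\, \proj_{S^\perp}(y).
\end{equation*}

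Second, I would apply the Courant--Fischer (Rayleigh quotient) theorem to $L(\mathbb{G})$ restricted to the invariant subspace $S^\perp$, which is the orthogonal complement of its kernel. On $S^\perp$, the smallest eigenvalue of $L(\mathbb{G})$ is $\lambda_2$ and the largest is $\lambda_{|\mathcal{V}|}$. Hence for every $v \in S^\perp$,
\begin{equation*}
\lambda_2 \|v\|_2^2 \;\leq\; v^\top L(\mathbb{G}) v \;\leq\; \lambda_{|\mathcal{V}|} \|v\|_2^2.
\end{equation*}
Substituting $v = \proj_{S^\perp}(y) \in S^\perp$ and combining with the identity from the first step yields the desired two-sided bound.

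There is no genuine obstacle here: the only point that needs a line of justification is that the connectedness of $\mathbb{G}$ follows from the globally-reachable-node assumption on $\mathcal{D}$ (any vertex can reach the globally reachable node in $\mathcal{D}$, hence also in $\mathbb{G}$ by ignoring edge orientations, giving connectivity), which in turn guarantees the strict inequality $\lambda_2 > 0$ and the one-dimensional kernel. Everything else is standard linear algebra on a symmetric PSD matrix.
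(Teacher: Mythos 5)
Your proof is correct and follows essentially the same route as the paper: both decompose $y$ into its $S$ and $S^\perp$ components, use $L(\mathbb{G})\mathbb{1}=0$ to reduce $y^\top L(\mathbb{G})y$ to the quadratic form on $\proj_{S^\perp}(y)$, and then apply the Rayleigh-quotient bounds on the eigenspace complement of the kernel (the paper simply writes out the eigenbasis expansion $m=\sum_{i\geq 2}b_i v_i$ explicitly rather than citing Courant--Fischer). No gaps.
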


\begin{proof}
    According to the given conditions, $\mathbb{G}$ is a connected graph. So, let $0=\lambda_1<\lambda_2\leq\ldots\leq\lambda_{\vert\mathcal{V}\vert}$ be the eigenvalues of $L(\mathbb{G})$ and $\{v_1,v_2,\ldots,v_{\vert\mathcal{V}\vert}\}$ be the corresponding orthogonal set of eigenvectors. Note that these eigenvectors also form an orthogonal basis of $\mathbb{R}^{{\vert\mathcal{V}\vert}}$ where 
    the vector $v_1=\tfrac{1}{\sqrt n}\,{\bf 1}_n$ spans the \emph{agreement subspace} $S:=\operatorname{span}\{{\bf 1}_n\}$, while  $\{v_2,\ldots,v_{n}\}$ is an orthonormal basis for the \emph{disagreement subspace} $S^{\perp}$.  
    Because $L(\mathbb G)$ is symmetric, it has the orthogonal decomposition, i.e., $L(\mathbb G)=\sum\limits_{i=1}^{\vert\mathcal{V}\vert}\lambda_i v_iv_i^\top$. 

    Any vectors $y\in\mathbb{R}^{{\vert\mathcal{V}\vert}}$, can likewise be decomposed as $y=\tfrac{1}{\vert\mathcal{V}\vert}\mathbb{1}\mathbb{1}^\top y+m$, where $m=\proj_{S^\perp}(y)$ and $y^\top(I-\tfrac{1}{\vert\mathcal{V}\vert}\mathbb{1}\mathbb{1}^\top)y=m^\top m$. Also, since $L(\mathbb G)\mathbb{1}_{\vert\mathcal{V}\vert}=\mathbb{0}_{\vert\mathcal{V}\vert}$, it follows that $y^\top L(\mathbb G)y=m^\top L(\mathbb G)m$. Note that the vector $m$ can also be represented by the eigenbasis, i.e., $m=\sum\limits_{i=2}^{\vert\mathcal{V}\vert}b_iv_i$.

    Then, we have 
    \begin{align*}
        y^\top L(\mathbb G) y&=m^\top L(\mathbb G)m=\left(\sum\limits_{i=2}^{\vert\mathcal{V}\vert}b_iv_i\right)^\top L (\mathbb G)\left(\sum\limits_{i=2}^{\vert\mathcal{V}\vert}b_iv_i\right)\\
        &\hspace{-1cm}=\sum\limits_{i=2}^{\vert\mathcal{V}\vert}\sum\limits_{j=2}^{\vert\mathcal{V}\vert}b_ib_jv_i^\top L(\mathbb G)v_j
        =\sum\limits_{i=2}^{\vert\mathcal{V}\vert}b_i^2\lambda_iv_i^\top v_i=\sum\limits_{i=2}^{\vert\mathcal{V}\vert}\lambda_ib_i^2,
    \end{align*}
where we use $L(\mathbb G)v_j=\lambda_jv_j$, $v_i^\top v_j=0$ and $v_i^\top v_i=1$. Apply Rayleigh quotient,
\begin{equation*}
    \tfrac{y^\top L(\mathbb G) y}{\proj_{S^\perp}^\top(y)\proj_{S^\perp}(y)}=\tfrac{m^\top L(\mathbb G) m}{m^\top m}=\tfrac{\sum_{i=2}^{\vert\mathcal{V}\vert}\lambda_ib_i^2}{\sum_{i=2}^{\vert\mathcal{V}\vert}b_i^2},
\end{equation*}
where we arrive at our desired result.
\end{proof}

Now, we are ready to prove Proposition \ref{prop:IOP_controllers}.

Let $\mathbb{G}$ and $L(\mathbb{G})$ be defined as above, and $E$ be the incidence matrix of the graph $\mathcal{D}$. Sum up the passivity inequalities of all the controllers,
    \begin{equation}
    \begin{aligned}\label{eq:controller_passivity}
        z^\top \proj_{S^\perp}(y)&= \mu^\top E^\top \proj_{S^\perp}(y)\\
        &=\mu^\top E^\top y-\tfrac{1}{\vert \mathcal{V} \vert}\mu^\top E^\top \mathbb{1}_{\vert \mathcal{V} \vert}\mathbb{1}_{\vert \mathcal{V} \vert}^\top y\\
        &=\mu^\top E^\top y=\mu^\top\zeta \geq \sum_{k=1}^{|\mathcal{E}|} \dot{W}_k+\sum_{k=1}^{|\mathcal{E}|} \alpha_k \mu_k^2 \\ &+\sum_{k=1}^{|\mathcal{E}|} \gamma_k \zeta_k^2 
         \geq \sum_{k=1}^{|\mathcal{E}|} \dot{W}_k+\alpha\|\mu\|_2^2+\gamma\|\zeta\|_2^2 \\
         & = \sum_{k=1}^{|\mathcal{E}|} \dot{W}_k+\alpha\|\mu\|_2^2+\gamma y^\top L(\mathbb{G}) y,
         \end{aligned}
    \end{equation}
    where we use the properties of incidence matrix, i.e., $E^\top\mathbb{1}_{\vert \mathcal{V} \vert}=\mathbb{0}_{\vert \mathcal{E} \vert}$ and $E E^\top=L(\mathbb{G})$.

\subsection*{Proof of Corollary \ref{coro:passive_agents}}
     We start by showing that the trajectories of $(\Upsilon,\Pi,\mathcal{G},w)$ are bounded. First, the forward path of Figure \ref{fig:decomp} (from $u$ to $y$) is passive because of the passive agents $\Upsilon_i$. And the storage function for $\Upsilon$ can be chosen as $\sum_{i=1}^{\vert\mathcal{V}\vert}\int_0^{x_i}h_i(s){\rm d}s$, which is radially unbounded. Under the given assumptions, the feedback path of Figure \ref{fig:decomp} (from $y$ to $z$) is also passive. Consequently, it satisfies a global dissipation inequality, where the rate of change of the storage function is bounded by the supply rate. The storage function can be chosen to be radially unbounded, so the trajectories must be bounded. It follows that $u(t),y(t),\zeta(t)$ and $\mu(t)$ are bounded. Thus, the agent relation $H_a$ and the controller relations $H_c$ are bounded.

     For inequality \eqref{eq:upsilon_dissipativity}, since $u=-B_o\mu$ and $-\|u\|_2^2\geq -\max(D_o)\|\mu\|_2^2$ \cite{yue2024balance_passivity}, it follows that,
     \begin{small}
         \begin{equation}\label{eq:SP_agents}
         u^\top \proj_{S^\perp}(y)\geq \dot{Q}(x)-\tfrac{\max(D_o)M}{2}\|\mu\|_2^2-\tfrac{M}{2}\|\proj_{S^\perp}(y)\|_2^2.
     \end{equation}
     \end{small}
     
     Then, the controller relation $H_c$ with $\alpha\geq\max(D_o)\tfrac{M}{2}$ and $\gamma\lambda_2> \tfrac{M}{2}$ compensate for the negativity terms in \eqref{eq:SP_agents}. Now, we have established the conditions needed to apply Theorem \ref{thm:SPassivity_outconsensus}. This completes our proof.


\end{document}